\newcommand{\R}{\mathbf{R}}
\newcommand{\pr}{\textbf{P}}
\newcommand{\ex}{\mathbf{E}}
\theoremstyle{plain}
\newtheorem{theorem}{Theorem}
\newtheorem{corollary}{Corollary}
\newtheorem{proposition}{Proposition}
\theoremstyle{definition}
\theoremstyle{remark}
\newcommand{\formula}[2][nolabel]
{\ifthenelse{\equal{#1}{nolabel}}
 {\begin{align*} #2 \end{align*}}
 {\ifthenelse{\equal{#1}{}}
  {\begin{align} #2 \end{align}}
  {\begin{align} \label{#1} #2 \end{align}}
 }
}
\numberwithin{equation}{section}
\begin{document}

%
%

\title []{Wallach sets and squared Bessel particle systems}
\thanks{Jacek Ma\l{}ecki was supported by the National Science Centre (Poland) grant no. 2013/11/D/ST1/02622.}
\subjclass[2010]{60J60, 60H10, 60B11}
\keywords{Wallach set, particle systems, Squared Bessel process, stochastic differential equations}
\author{Piotr Graczyk, Jacek Ma{\l}ecki}
\address{Piotr Graczyk \\ LAREMA \\ Universit\'e d'Angers \\ 2 Bd Lavoisier \\ 49045 Angers cedex 1, France}
\email{piotr.graczyk@univ-angers.fr}
\address{  Jacek Ma{\l}ecki,  \\ Faculty of Pure and Applied Mathematics\\Wroc{\l}aw University of Technology \\ ul. Wybrze{\.z}e Wyspia{\'n}\-skiego 27 \\ 50-370 Wroc{\l}aw, Poland}
\email{jacek.malecki@pwr.wroc.pl }

\begin{abstract}
We determine the classical and the non-central  Wallach sets $W_0$ and $W$ by classical probabilistic methods. We prove the Mayerhofer conjecture on $W$. We exploit the fact that $(x_0,\beta)\in W$ if and only if $x_0$ is the starting point and $2\beta$ is the drift of a squared Bessel matrix process $X_t$ on the cone $\overline{Sym^+(\R,p)}$. Our methods are based on the study of SDEs  for the symmetric polynomials of $X_t$ and for the eigenvalues of  $X_t$, i.e. the squared Bessel particle systems.
\end{abstract}
\maketitle
%
%
\section{Introduction and Preliminaries}
\label{}
The aim of this paper is to prove the characterization of the non-central Wallach set $W$, conjectured in \cite{bib:mayerJMA} by Mayerhofer. More precisely, let us denote by $\mathcal{S}_p=Sym(\R,p)$ the space of symmetric $p\times p$ matrices and let $\mathcal{S}_p^+$ be the open cone of positive definite matrices. The (central) Wallach set $W_{0}$ is defined as the set of admissible $\beta\in\R$ such that there exists a random matrix $X$ with values in $\bar {\mathcal S}^+_p$ (equivalently a measure with support in $\bar {\mathcal S}^+_p$) such that its Laplace transform is of the form
 $$
 \ex e^{-{\bf Tr}(uX)}=(\det(I+2\Sigma u))^{-\beta},\ \  u\in  {\mathcal S}^+_p,
 $$
where $\Sigma\in {\mathcal   S}_p^+$. It is well-known (see  \cite{bib:farautKOR}, pp. 137, 349) that
$$
W_0=\frac12 B \cup \left[\frac{p-1}2,\infty\right)   \/,
$$
where $B=\{0,1,\cdots,p-2\}$. However, a similar question can be stated in a more general setting. Let $x_0\in \bar  {\mathcal  S}_p^+$ and $\beta \in \R$.  We say  that the pair $(x_0,\beta)$ belongs to the  non-central Wallach set $W$
if there exists a random matrix $X$ with values in $\bar {\mathcal  S}_p^+$ having the Laplace transform 
  \begin{equation}
     \label{eq:Lap1}
     \ex e^{-{\bf Tr}(uX) }= (\det(I+2\Sigma u))^{-\beta} \exp[- {\bf Tr}(x_0(I+2\Sigma  u)^{-1}u))],  \ \  u\in  {\mathcal S}^+_p,
  \end{equation}
	for a matrix $\Sigma\in {\mathcal   S}_p^+$. The interest in random matrices verifying \eqref{eq:Lap1} comes from the fact that if
	$$
	  X = \xi_1\xi_1^T+\ldots+\xi_n\xi_n^T=q(\xi)\/,\quad  \xi=(\xi_1,\ldots,\xi_p)\/,
	$$
	where $\xi_{i}\sim N_p(m_i,\Sigma)$ are independent normal vectors in $\R^p$, then the Laplace transform of $X$ is given by \eqref{eq:Lap1} with $\beta =n/2$ and $x_0=q(m_1,\ldots,m_n)$ (see \cite{bib:mayer}). Consequently, random matrices $X$ verifying \eqref{eq:Lap1} are of great importance in statistics as estimators of the normal covariance matrix $\Sigma$. Obviously $(0,\beta)\in W$ if and only if $\beta \in W_0$. Note also that whenever $(x_0,\beta)\in W$ then we have $\beta\geq 0$, otherwise  $\ex e^{-{\bf Tr}(uX) }$ would be unbounded (take for example $u=nI$, $n\in\mathbb{N}$, $n\to \infty$).  
		
	The characterization of the non-central set Wallach set $W$ has been recently studied by Letac and Massam in \cite{bib:LetMassFalse}. However, \cite{bib:LetMassFalse} contains an error in the formulation of the result and in its proof, which was pointed out by Mayerhofer\cite{bib:mayerJMA}.  Mayerhofer stated in \cite{bib:mayerJMA} the following conjecture 
		
\medskip

   {\bf Mayerhofer Conjecture}. {\it The non-central Wallach set is characterized by }
   $$
  (x_0,\beta )\in W \ \ \Leftrightarrow \ \ (\beta\in  \left[\frac{p-1}2,\infty\right), x_0\in \bar  {\mathcal   S}_p^+)  \ {\it or}\ (2\beta\in B, rk(x_0)\le 2\beta).
	$$
	
	Sufficiency of these conditions  was showed by Bru in \cite{bib:b91}, except the case $2\beta=p-1$, that may be found in 
	\cite{bib:LetMassFalse} and \cite{bib:gm11}. In what concerns the necessity, Mayerhofer  proved in  \cite{bib:mayerJMA} that if $(x_0,\beta)\in W$ and $2\beta\in B$, then $rk(x_0)\le 2\beta +1$. In this note  we provide a simple proof of the Mayerhofer conjecture based on It\^o stochastic calculus. Completely different approach based on  analytical methods was proposed in the unpublished note {\cite{bib:LetMass}}.
	
 Denoting more precisely the set of $(x_0,\beta)$ with the property \eqref{eq:Lap1} by $W_\Sigma$, it is easy 
 to show (\cite{bib:mayer}, Prop.III.5.1) that $(x_0,\beta)\in W_\Sigma$ if and only if $(\Sigma^{-\frac12}x_0\Sigma^{-\frac12},\beta)\in W_I$. 
 Thus the  conditions of  Mayerhofer's conjecture are the same  for any $\Sigma\in {\mathcal   S}_p^+$ and, in the sequel, we will only consider the case $\Sigma=I$.

Our  main tools are the results of the article \cite{bib:gm2}, that we adapt to  the set-up of   BESQ matrix SDEs 
\eqref{eq:Wishart:SDe}
and their eigenvalue processes, i.e. BESQ particle systems.
These new results on  BESQ particle systems are interesting independently and are an income to the study of these processes started in 
\cite{bib:katori2011}.

\section{Wallach sets and Stochastic Analysis}
We begin this section by recalling (following the exposure given in \cite{bib:mayer}, \cite{bib:mayerJMA}) the relations between Wallach sets and matrix squared Bessel processes. Consequently, we translate the Mayerhofer conjecture to the question of the existence of solutions in $\bar{\mathcal{S}}_p^{+}$ of the appropriate matrix SDE (depending on $\beta$) starting from $x_0$. Then, using symmetric polynomials method and comparison theorem we show that such solutions exist only if $(x_0,\beta)$ fulfills the conditions stated by Mayerhofer.

 \subsection{  Wallach  sets and stochastic processes}
  
 Let $W_t$ be a Brownian matrix of dimension $p\times p$. We call {\it matrix BESQ process} any solution of the following SDE
 \begin{eqnarray}
\label{eq:Wishart:SDe}
   dX_t = \sqrt{|X_t|}dW_t+dW^T_t\sqrt{|X_t|}+\alpha Idt\/,\quad    X_t\in  {\mathcal S}_p\/,t\ge 0;\quad  X_0=x_0.
	\end{eqnarray}
  
Recall that if $g:\R\mapsto\R$ then $g(X)$ is defined spectrally, i.e. $g(U diag(\lambda_i) U^T)=U diag(g(\lambda_i)) U^T$, where $U\in SO(p)$.
 When $X_0=x_0\in \bar {\mathcal  S}_p^+$, such processes were studied by Bru in \cite{bib:b91}, who among others showed that if (\ref{eq:Wishart:SDe}) admits a solution $X_t \in \bar {\mathcal  S}_p^+$ with  $X_0=x_0\in   \bar{\mathcal   S}^+_p$, then the Laplace transform of $X_t$ is 
\begin{equation}
 \label{Lap_Wish} 
 \ex^{x_0}[\exp(- {\bf Tr}(uX_t)]=(\det(I+2tu))^{-\alpha/2} \exp[- {\bf Tr}(x_0(I+2tu)^{-1}u))],\quad u\in  {\mathcal S}^+_p\/.
   \end{equation}
	In particular, by taking $t=1$, it means that $(x_0, \frac{\alpha}{2})\in W$. Mayerhofer showed in \cite{bib:mayer} and \cite{bib:mayerJMA} that in fact these properties are equivalent.	
	
\begin{proposition}[Mayerhofer]
The stochastic differential equation (\ref{eq:Wishart:SDe}) with $x_0\in \bar {\mathcal  S}_p^+$ has a solution in  $\bar {\mathcal  S}_p^+$ if and only if $(x_0,\frac{\alpha}2)\in W$. 
\end{proposition}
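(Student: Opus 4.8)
\medskip

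\noindent\emph{Proof strategy.} One implication is essentially contained in the discussion preceding the statement: if (\ref{eq:Wishart:SDe}) has a solution $X_t\in\cl{\mathcal{S}}_p^{+}$ with $X_0=x_0$, then by Bru's formula (\ref{Lap_Wish}), taken at $t=1$, the matrix $X_1$ has Laplace transform $(\det(I+2u))^{-\alpha/2}\exp[-\mathbf{Tr}(x_0(I+2u)^{-1}u)]$, which is exactly (\ref{eq:Lap1}) with $\Sigma=I$ and $\beta=\alpha/2$; hence $(x_0,\alpha/2)\in W$. All the content is in the converse: starting only from the \emph{single} distribution whose existence is guaranteed by $(x_0,\alpha/2)\in W$, one must reconstruct a whole process that solves the (degenerate) matrix SDE (\ref{eq:Wishart:SDe}) and stays in the cone.

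For the converse the plan is to build the candidate transition semigroup out of the given marginal. After the reduction to $\Sigma=I$, for $t>0$ and $x\in\cl{\mathcal{S}}_p^{+}$ let $P_t(x,\cdot)$ be the probability law on $\cl{\mathcal{S}}_p^{+}$ with Laplace transform $\widehat{P_t}(x,u)=(\det(I+2tu))^{-\alpha/2}\exp[-\mathbf{Tr}(x(I+2tu)^{-1}u)]$, $u\in\mathcal{S}_p^{+}$, which is (\ref{eq:Lap1}) with $\Sigma=tI$ and $\beta=\alpha/2$. Writing $v=(I+2tu)^{-1}u\in\mathcal{S}_p^{+}$ and using that $u$ commutes with $I+2tu$, one has $I+2sv=(I+2tu)^{-1}(I+2(s+t)u)$, hence $\det(I+2sv)=\det(I+2(s+t)u)/\det(I+2tu)$ and $(I+2sv)^{-1}v=(I+2(s+t)u)^{-1}u$; feeding these into $\int_{\cl{\mathcal{S}}_p^{+}}\widehat{P_t}(y,u)\,P_s(x,dy)$ gives at once the Chapman--Kolmogorov identity $\int_{\cl{\mathcal{S}}_p^{+}}\widehat{P_t}(y,u)\,P_s(x,dy)=\widehat{P_{s+t}}(x,u)$, so $(P_t)_{t\ge0}$ is a Markovian transition function. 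One then checks that it is Feller on the relevant state space with continuous sample paths (the squared Bessel / affine semigroup on the cone), differentiates $\widehat{P_t}$ at $t=0$ to identify its generator with the second order operator whose martingale problem is equivalent to (\ref{eq:Wishart:SDe}), and applies the standard correspondence between a well-posed martingale problem and a weak solution of the corresponding SDE to obtain a solution of (\ref{eq:Wishart:SDe}) in $\cl{\mathcal{S}}_p^{+}$ started at $x_0$. When $\alpha$ is an integer lying in $B$ one can shortcut this via Bru's explicit realisation: taking a $p\times\alpha$ Brownian matrix $\mathcal{B}_t$ with $\mathcal{B}_0\mathcal{B}_0^{T}=x_0$, the process $X_t=\mathcal{B}_t\mathcal{B}_t^{T}$ stays in $\cl{\mathcal{S}}_p^{+}$, has the required Laplace transform, and by It\^o's formula solves (\ref{eq:Wishart:SDe}) for a suitable Brownian matrix $W$.

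The main obstacle is that $P_t(x,\cdot)$ is an honest probability measure precisely when $(x/t,\alpha/2)\in W$, so the construction needs membership in $W$ of a whole one-parameter family of rescaled points, and one must identify the true state space carrying the semigroup; this is where the hypothesis has to be exploited. For $\alpha\ge p-1$ there is no difficulty: the already known sufficiency part of Mayerhofer's conjecture --- Bru \cite{bib:b91} for $\alpha>p-1$, and \cite{bib:LetMassFalse,bib:gm11} for $\alpha=p-1$ --- gives $(x,\alpha/2)\in W$ for \emph{every} $x\in\cl{\mathcal{S}}_p^{+}$, so the semigroup lives on the full cone. For $\alpha\in B$ the converse is more delicate and I would follow \cite{bib:mayerJMA} (equivalently, the affine-processes viewpoint), working on the faces $\{\mathrm{rk}\,x\le\alpha\}$ of the cone where the outer-product realisation above applies and where Mayerhofer's rank bound $\mathrm{rk}\,x_0\le\alpha+1$ is the relevant input. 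Altogether I expect the regularity part --- the Feller property and path continuity of $(P_t)$, equivalently the well-posedness of the martingale problem, together with the correct choice of state space --- to be the real work; the Chapman--Kolmogorov step and the first implication are formal.
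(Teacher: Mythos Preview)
The paper does not supply its own proof of this proposition: it is quoted as a result of Mayerhofer, with a pointer to \cite{bib:mayer} and \cite{bib:mayerJMA}, and no argument is reproduced. So there is no in-paper proof to compare your attempt against.

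On the substance of your sketch: the forward implication is correct and is exactly the observation recorded in the paragraph preceding the proposition. For the converse, the semigroup/martingale-problem route you outline is indeed the natural one, and you have correctly isolated the genuine difficulty, namely that $P_t(x,\cdot)$ must be a probability measure on $\bar{\mathcal S}_p^{+}$ for every $x$ in the state space, not only for $x_0$. Your proposed resolution by a case split on $\alpha$, however, leaves gaps. First, the range $0<\alpha<p-1$, $\alpha\notin B$, is not covered; nothing known prior to this proposition excludes $(x_0,\alpha/2)\in W$ for such $\alpha$ when $x_0\neq 0$ --- indeed, ruling this out is one of the conclusions the present paper draws \emph{using} Proposition~1, so assuming it here would be circular. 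Second, for $\alpha\in B$ you invoke Mayerhofer's bound $\mathrm{rk}(x_0)\le\alpha+1$ but then propose to run the process on the face $\{\mathrm{rk}\,x\le\alpha\}$ via the $p\times\alpha$ outer-product construction; these do not match when $\mathrm{rk}(x_0)=\alpha+1$, and that construction cannot start from such an $x_0$. A non-circular proof of the converse must proceed from the bare hypothesis $(x_0,\alpha/2)\in W$ alone, without any a priori restriction on $\alpha$ or on $\mathrm{rk}(x_0)$; this is what the cited references carry out via the affine-process machinery.
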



\subsection{Symmetric polynomials of solutions of matrix SDEs}\label{sec:polyn}

If $X$ is a symmetric $p\times p$ matrix, we define  the  polynomials $e_n(X)$
as basic symmetric polynomials 
 $$
   e_n(X) = \sum_{i_1<\ldots<i_n}\lambda_{i_1}(X)\lambda_{i_2}(X)\ldots \lambda_{i_n}(X)\/,\ \ \ \quad n=1,\ldots,p;
 $$
in the eigenvalues $\lambda_1(X) \le  \ldots\le\lambda_p(X)$ of $X$. Moreover, we use the convention that $e_0(X)\equiv 1$. Up to the sign change, the polynomials $e_n$ are the coefficients of the characteristic polynomial of $X$, i.e.
$$
\det(X-uI)=(-1)^p  u^p + (-1)^{p-1} e_1(X)u^{p-1}+\ldots  -e_{p-1}(X)u+e_p(X) 
$$
and are polynomial functions of the entries of the matrix $X$. In particular, $e_p(X)=\det X$. In \cite{bib:gm2}, the symmetric polynomials related to general class of non-colliding particle systems were studied in details. Using the results therein we get the following characterization of the symmetric polynomials related to matrix squared Bessel processes.

\begin{proposition}
\label{prop:Poly}
Let $X_t$ be a solution of the matrix SDE \eqref{eq:Wishart:SDe} and  $X_t\in  {\mathcal S}^+_p,t\ge 0$. Then the
 symmetric polynomials $e_n(t):=e_n(X_t)$, $n=1,\ldots, p$ are semimartingales satisfying the following system of SDEs
\begin{eqnarray}
   de_n &=& M_n(e_1,\ldots,e_p)dV_n +(p-n+1)(\alpha-n+1)e_{n-1}dt\/,\quad n=1,\ldots,p-1\/, \label{eq:polynom_first:SDEs} \\
   de_p &=& 2\sqrt{e_{p-1}e_p}dV_p +(\alpha-p+1)e_{p-1}dt,  \label{eq:polynom_last:SDEs}  
	\end{eqnarray}
where  $V_i$, $i=1,\ldots, p$  are  one-dimensional Brownian motions and the functions $M_n$ are continuous on $\R^p$.
	\end{proposition}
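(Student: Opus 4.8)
The plan is to apply It\^o's formula to the maps $e_n$, which are genuine \emph{polynomials} in the entries of a symmetric matrix, along the semimartingale $X_t$, and then to identify the drift and the quadratic variation of the resulting one-dimensional semimartingales by passing to eigenvalue coordinates. Since $e_n\colon\mathcal S_p\to\R$ is a polynomial, $e_n(t):=e_n(X_t)$ is automatically a semimartingale, and It\^o's formula gives
\begin{equation*}
 de_n(t)=dN_n(t)+(\mathcal L e_n)(X_t)\,dt,
\end{equation*}
where $N_n$ is the continuous local martingale built from the martingale part $\sqrt{\abs{X_t}}\,dW_t+dW_t^{T}\sqrt{\abs{X_t}}$ of $dX_t$ and $\mathcal L$ is the generator of \eqref{eq:Wishart:SDe}. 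Both $\mathcal L e_n$ and the quadratic variation density $\gamma_n:=\Gamma(e_n,e_n)$, $\Gamma$ being the carr\'e du champ of $\mathcal L$, are $SO(p)$-invariant polynomials in the entries of $X$ (because $e_n$ and \eqref{eq:Wishart:SDe} are conjugation-equivariant), hence symmetric polynomials in the eigenvalues $\lambda_1(X)\le\dots\le\lambda_p(X)$, hence polynomial functions of $e_1(X),\dots,e_p(X)$ by the fundamental theorem of symmetric polynomials. This is precisely the mechanism of \cite{bib:gm2}, which we specialize to \eqref{eq:Wishart:SDe}.

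To evaluate $\mathcal L e_n$ and $\gamma_n$ it suffices to do so on the set of matrices with distinct eigenvalues, where (Bru \cite{bib:b91}; see also \cite{bib:gm2}) the eigenvalue process is the BESQ particle system
\begin{equation*}
 d\lambda_i=2\sqrt{\lambda_i}\,d\beta_i+\Big(\alpha+\sum_{j\neq i}\frac{\lambda_i+\lambda_j}{\lambda_i-\lambda_j}\Big)dt,\qquad i=1,\dots,p,
\end{equation*}
driven by independent Brownian motions $\beta_i$. The key point is that $e_n$ is multilinear in the $\lambda_i$, so $\partial_{\lambda_i}^{2}e_n=0$ and the second order part of $\mathcal L$ annihilates $e_n$; only the drifts of the $\lambda_i$ survive. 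Writing $e_{m}^{(i)}$ (resp.\ $e_m^{(i,j)}$) for the degree-$m$ elementary symmetric polynomial in the variables $\set{\lambda_k:k\neq i}$ (resp.\ $\set{\lambda_k:k\neq i,j}$), one has $\partial_{\lambda_i}e_n=e_{n-1}^{(i)}$ and $\sum_i e_{n-1}^{(i)}=(p-n+1)e_{n-1}$, so the non-singular part contributes $\alpha(p-n+1)e_{n-1}$. For the singular sum I would symmetrize in $(i,j)\leftrightarrow(j,i)$ and use the telescoping identity $e_{n-1}^{(i)}-e_{n-1}^{(j)}=(\lambda_j-\lambda_i)e_{n-2}^{(i,j)}$, which cancels the denominator and leaves $-\sum_{i<j}(\lambda_i+\lambda_j)e_{n-2}^{(i,j)}$; a count of monomials identifies this with $-(n-1)(p-n+1)e_{n-1}$. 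Adding the two contributions yields $\mathcal L e_n=(p-n+1)(\alpha-n+1)e_{n-1}$, the drift in \eqref{eq:polynom_first:SDEs} (and in \eqref{eq:polynom_last:SDEs} when $n=p$).

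For the diffusion coefficient, the same computation gives $\gamma_n=4\sum_i\lambda_i\big(e_{n-1}^{(i)}\big)^{2}$, again symmetric in the eigenvalues, hence $\gamma_n=M_n^{2}(e_1,\dots,e_p)$ for some polynomial $M_n^{2}$; since $\gamma_n\ge0$ on $\bar{\mathcal S}_p^+$ the function $M_n:=\sqrt{M_n^{2}}$ is continuous there. When $n=p$, $e_{p-1}^{(i)}=e_p/\lambda_i$, so $\gamma_p=4e_p^{2}\sum_i\lambda_i^{-1}=4e_p^{2}\cdot(e_{p-1}/e_p)=4e_{p-1}e_p$, i.e.\ $M_p=2\sqrt{e_{p-1}e_p}$. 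Finally $N_n$ is a continuous local martingale with $\langle N_n\rangle_t=\int_0^{t}M_n^{2}(e_1(s),\dots,e_p(s))\,ds$, so by the classical representation of such martingales as time-changed Brownian motions there is a one-dimensional Brownian motion $V_n$ with $dN_n=M_n(e_1,\dots,e_p)\,dV_n$. Combining, we obtain the system \eqref{eq:polynom_first:SDEs}--\eqref{eq:polynom_last:SDEs}; the $V_n$ are produced one equation at a time, so nothing is asserted about their joint law.

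The bulk of the work, and the step needing care, is the symmetric-function bookkeeping in the middle paragraph: reducing the singular double sum $\sum_i\partial_{\lambda_i}e_n\sum_{j\neq i}(\lambda_i+\lambda_j)/(\lambda_i-\lambda_j)$ to $-(n-1)(p-n+1)e_{n-1}$ via the telescoping identity and the monomial count. A minor caveat is that the eigenvalue SDE, hence the eigenvalue-coordinate computations, hold only off the collision set $\set{\lambda_i=\lambda_j \text{ for some } i\neq j}$; but since $\mathcal L e_n$ and $\gamma_n$ are polynomials in the matrix entries of $X$, those identities extend by continuity to all of $\mathcal S_p$, and the matrix-level It\^o computation --- which never diagonalizes $X_t$ --- is therefore valid regardless of collisions of the $\lambda_i(X_t)$.
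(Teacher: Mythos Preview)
Your proof is correct and follows essentially the same route as the paper's: apply It\^o at the matrix level, pass to eigenvalue coordinates on the set where the eigenvalues are distinct to compute the drift and the carr\'e du champ, and then note that these quantities are polynomials in the entries of $X$ so the identification extends beyond the non-collision set. The paper simply cites \cite{bib:gm2} for the symmetric-function identities (recording $M_n=2\bigl(\sum_i|\lambda_i|(e_{n-1}^{\overline i})^2\bigr)^{1/2}$ and the drift as $\sum_i\alpha e_{n-1}^{\overline i}-\sum_{i<j}(\lambda_i+\lambda_j)e_{n-2}^{\overline i,\overline j}=(p-n+1)(\alpha-n+1)e_{n-1}$), whereas you derive them explicitly via the telescoping relation $e_{n-1}^{(i)}-e_{n-1}^{(j)}=(\lambda_j-\lambda_i)e_{n-2}^{(i,j)}$ and the monomial count; the content is the same.
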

	
	\begin{proof}
	
	Note that the explicit forms of the martingale parts $M_n(e_1,\ldots,e_p)dV_n$  as well as their brackets $d\left<e_n,e_{m}\right>$  are known  for every $n,m=1,\ldots,p$ (see Proposition 3.2 in \cite{bib:gm2}). However, they will not be used in the sequel apart from the case $n=p$ stated explicitly in Proposition \ref{prop:Poly} above.
	
	The symmetric polynomials $(e_1,\ldots, e_n)$ are given by  an analytic function (polynomials of the coefficients) of the matrix $X$. Thus   It\^o formula, applied to the SDE for the matrix process $X_t$, gives a system of the SDEs  for $(e_1,\ldots, e_n)$. We determine these SDEs like in Propositions 3.1 and 3.2 in \cite{bib:gm2}, using Theorem 3 from \cite{bib:gm11} in the case when eigenvalues of $x_0$ are all distinct.
Evidently, by It\^o formula, this form of the SDEs system describing $(e_1,\ldots,e_p)$ does not depend on the starting point $x_0$, i.e. it does not change if we remove the condition that eigenvalues of the initial points are all different. We write $e_{n}^{\overline i}$ for the incomplete polynomial of order $n$, not containing the variable $\lambda_i(e)$; the notation $e_{n}^{\overline i,\overline j}$ is analogous. Using  formulas from Proposition 3.2 in \cite{bib:gm2} we find that
	$$
	M_n=2\left(\sum_{i=1}^p|\lambda_i|(e_{n-1}^{\overline i})^2\right)^{1/2} 
	$$
	and, in particular,
	 when  $X_t\in  {\mathcal S}^+_p$, $t\ge 0$, (i.e. the eigenvalues are non-negative),
	we obtain $M_p= 2\sqrt{e_{p-1}e_p}$. Moreover, we have the following expressions for the drift parts of $de_n$:
	$$ 
	\sum_{i=1}^p \alpha e_{n-1}^{\overline i}-\sum_{i<j}(|\lambda_i|+|\lambda_j|)e_{n-2}^{\overline{i},\overline j}=(p-n+1)(\alpha-n+1)e_{n-1},
	$$
	where we removed the absolute values since we assumed that all the eigenvalues are non-negative. This ends the proof.	

\end{proof}

The fact that matrix BESQ $X_t$ leaves $\bar{\mathcal{S}}_p^+$ is controlled by $e_p$, which is the determinant of $X_t$, i.e. if $e_p(t)$ is negative then $X_t$ cannot be in $\bar{\mathcal{S}}_p^+$. The explicit formulas for the SDEs describing the symmetric polynomials $e_1,\ldots,e_p$ can be used to show that the $e_p$ becomes negative 
when $e_p(0)=0$ and  $\alpha=2\beta$ is small enough. This is presented in the following proposition.

\begin{proposition}
\label{prop:polyn}
Let $\alpha\geq 0$ and $x_0\in \bar{\mathcal{S}}_p^+$. 
\begin{itemize}
\item[(i)]  Suppose $0<\alpha<p-1$, $\alpha\not\in B$ and $rk(x_0)<p$. Then $(x_0,\frac{\alpha}{2})\not\in W$.
In particular, the classical Wallach set $W_0=\frac12 B \cup [\frac{p-1}2,\infty)$.
\item[(ii)] If  $\alpha\in B, rk(x_0)<p$ and   $(x_0,\frac{\alpha}{2})\in W$, then $rk(x_0)\le\alpha$.
\end{itemize}
\end{proposition}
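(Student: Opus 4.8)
The strategy is to study the scalar process $e_p(t) = \det X_t$ via the SDE \eqref{eq:polynom_last:SDEs} and to show that, under the stated hypotheses, it is driven into the negative half-line with positive probability (or almost surely), contradicting $X_t \in \bar{\mathcal S}_p^+$. Since $rk(x_0) < p$ means $e_p(0) = \det x_0 = 0$, the determinant starts at $0$, and the key observation is that the sign of the drift coefficient $(\alpha - p + 1)$ in \eqref{eq:polynom_last:SDEs}, together with the magnitude of $\alpha$, governs whether $e_p$ can stay nonnegative.

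For part (i): assume $0 < \alpha < p-1$, $\alpha \notin B$, $rk(x_0) < p$, and suppose for contradiction that $(x_0, \alpha/2) \in W$, i.e. by Mayerhofer's Proposition the SDE \eqref{eq:Wishart:SDe} has a solution in $\bar{\mathcal S}_p^+$. The first step is to reduce to the case where $X_t$ actually enters the open cone $\mathcal S_p^+$ so that Proposition \ref{prop:Poly} applies and $e_{p-1}, e_p \ge 0$; one argues that if $\alpha > 0$ the process instantly has full rank (this is exactly the one-dimensional BESQ fact applied to the smallest eigenvalue, or can be extracted from the comparison results). Then I would compare $e_p(t)$ with a one-dimensional squared Bessel-type process: \eqref{eq:polynom_last:SDEs} reads $de_p = 2\sqrt{e_{p-1}e_p}\,dV_p + (\alpha - p + 1)e_{p-1}\,dt$, and since $\alpha < p - 1$ the drift is $-c\,e_{p-1}\,dt$ with $c = p - 1 - \alpha > 0$. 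Writing $de_p = 2\sqrt{e_{p-1}}\sqrt{e_p}\,dV_p - c\,e_{p-1}\,dt$ and performing a time change $d\tau = e_{p-1}(t)\,dt$ (valid as long as $e_{p-1} > 0$), the process $e_p$ in the new clock satisfies $d e_p = 2\sqrt{e_p}\,d\tilde V_\tau - c\,d\tau$, which is a squared Bessel process of dimension $\delta = 2(1-c) = 2(\alpha - p + 2) < 2$ started at $0$; a BESQ of dimension $< 2$ started at $0$ hits (and for dimension $\le 0$ stays at, then goes below — more precisely the SDE solution becomes negative) the origin and, with the negative drift, cannot remain nonnegative. The conclusion is that $e_p$ becomes strictly negative with positive probability before $e_{p-1}$ vanishes, so $X_t \notin \bar{\mathcal S}_p^+$, a contradiction. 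The case $\alpha \notin B$ and $\alpha < p-1$ together with $W_0 = \{0,\dots,p-2\}/2 \cup [\tfrac{p-1}{2},\infty)$ then follows by taking $x_0 = 0$ (so $rk(x_0) = 0 < p$) for all such $\alpha$, while the points $\alpha \in B$ and $\alpha \ge p-1$ are already known to lie in $W_0$ from the cited literature.

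For part (ii): now $\alpha = k \in B = \{0, 1, \dots, p-2\}$, $rk(x_0) < p$, and $(x_0, \tfrac{\alpha}{2}) \in W$; I must show $rk(x_0) \le \alpha = k$. Suppose $rk(x_0) = r$ with $k < r < p$ (if $r \le k$ we are done). The idea is to look not at $e_p$ but at the lowest-order symmetric polynomial that is forced to vanish at time $0$, namely to apply the same argument to the bottom $p - k$ eigenvalues. Concretely, one runs the comparison/SDE analysis for the "tail" determinant: the $r$-dimensional analysis of the smallest $p - r$ eigenvalues, or more cleanly one uses that because $\alpha = k$ is an integer in $B$, the coefficient $(p - n + 1)(\alpha - n + 1)$ in \eqref{eq:polynom_first:SDEs} vanishes precisely at $n = k+1$, so the polynomial $e_{k+1}$ obeys $de_{k+1} = M_{k+1}\,dV_{k+1}$ with *no* drift while the lower ones $e_{k+2}, \dots$ are fed only through $e_{k+1}$; but an SDE for the relevant tail quantity then behaves like a BESQ of dimension $\le 0$ when the rank exceeds $k$, which again cannot stay nonnegative. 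The cleanest route, which I would follow, is: apply part (i)'s comparison to the matrix process restricted (via the eigenvalue SDEs of \cite{bib:gm2}, as adapted in Proposition \ref{prop:Poly}) to its smallest $p - k$ eigenvalues, which themselves satisfy a matrix-BESQ-type system with parameter shifted so that the effective dimension is $\alpha - k = 0$ or negative once more than $k$ of them start at $0$ fails; since $rk(x_0) = r > k$ forces at least... — rather, since $rk(x_0) = r$ means exactly $p - r$ eigenvalues vanish at $0$, and we need the determinant-type quantity built from a block of size $> k$ to involve a genuine product of vanishing eigenvalues driven by a subcritical BESQ, deriving a contradiction as in (i).

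**Main obstacle.** The delicate point is part (ii): making rigorous the passage from "$rk(x_0) > \alpha$" to "some symmetric-polynomial quantity behaves like a subcritical (dimension $\le 0$) squared Bessel process and therefore becomes negative." In part (i) this is transparent because $e_p(0) = 0$ and the single SDE \eqref{eq:polynom_last:SDEs} has an unambiguously negative drift; in part (ii) with $\alpha$ an integer, $e_p$ may have zero drift (if $\alpha = p-1$, excluded) or positive drift, so one cannot look at $e_p$ directly — one must identify the correct intermediate polynomial or the correct sub-block of eigenvalues, control the time change by the (possibly vanishing) factor multiplying the noise, and invoke a comparison theorem valid up to the first time that factor degenerates. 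I expect the bulk of the work, and the only genuinely subtle estimate, to be organizing this localization/comparison argument and handling the behavior at the boundary of the cone where several eigenvalue processes and the relevant $M_n$ simultaneously vanish.
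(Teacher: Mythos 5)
Your core idea for part (i) — use the SDE \eqref{eq:polynom_last:SDEs} for $e_p = \det X_t$, recognize it as a time-changed one-dimensional squared Bessel process of negative dimension started at $0$, and conclude that $e_p$ must go strictly negative — is exactly the paper's starting point. But there are three genuine gaps.

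First, a computational error: after the time change $d\tau = e_{p-1}(t)\,dt$ the resulting squared Bessel process has dimension $\delta = \alpha - p + 1$, not $\delta = 2(\alpha - p + 2)$. The drift in a BESQ$^\delta$ SDE is $\delta\,dt$ itself; the factor of $2$ sits only in the diffusion coefficient. Your formula would wrongly predict that $e_p$ stays nonnegative for $\alpha \in (p-2, p-1)$, where in fact the true dimension is in $(-1,0)$ and the process does go negative.

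Second, and more seriously, you try to rule out the degenerate case $e_{p-1} \equiv 0$ (which makes the time change trivial and stops the argument) by asserting that ``if $\alpha > 0$ the process instantly has full rank.'' This is false in general — for $\alpha \in B$ and $rk(x_0) \le \alpha$ the matrix process stays on a lower-dimensional face of $\bar{\mathcal S}_p^+$ for all time; in fact this is precisely what the theorem characterizes, so invoking instant full rank here is circular. The paper instead handles the case $A_t \equiv 0$ (i.e. $e_{p-1} \equiv 0$) by a downward induction on the system \eqref{eq:polynom_first:SDEs}: $e_n \equiv 0$ forces both the martingale part and the drift $(p-n+1)(\alpha-n+1)e_{n-1}$ to vanish identically, and since $\alpha \notin B$ the coefficient $\alpha - n + 1$ is never zero, so $e_{n-1} \equiv 0$; iterating down to $e_1 \equiv 0$ contradicts the fact that the drift of $e_1$ is $p\alpha\,dt \neq 0$. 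This induction, not a full-rank claim, is what closes the argument.

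Third, part (ii): you gesture at an analysis of sub-blocks of eigenvalues and a ``tail determinant,'' but you never reach a closed argument, as you yourself note in the ``main obstacle'' paragraph. The paper's (ii) is the same downward induction as (i), except that with $\alpha \in B$ the coefficient $\alpha - n + 1$ vanishes exactly at $n = \alpha + 1$, so the chain $e_p \equiv 0, e_{p-1} \equiv 0, \dots$ terminates at $e_{\alpha+1} \equiv 0$. In particular $e_{\alpha+1}(0) = 0$, and since $e_{\alpha+1}$ is a sum of nonnegative products of $\alpha + 1$ eigenvalues, this is equivalent to $rk(x_0) \le \alpha$. No restriction to eigenvalue sub-blocks or separate comparison theorem is needed; once you see that the single induction on the $e_n$ does all the work for both parts, part (ii) is a one-line modification of part (i).
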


\begin{proof}
To deal with (i) suppose that $(x_0,\frac{\alpha}{2})\in W$, so there exists a solution  $X_t$ of \eqref{eq:Wishart:SDe} such that $X_t\in \bar {\mathcal S}^+_p$ for every  $t\ge 0$. The condition $rk(x_0)<p$ is equivalent to $\lambda_1(0)=0$ as well as $e_p(0)=0$. Formula \eqref{eq:polynom_last:SDEs} shows that $e_p(X_t)$ is a BESQ$^{\alpha-p+1}(0)$ in $\R$
(the superscript of a BESQ denotes its dimension), starting from $0$ with a time change by $A_t=\int_0^t e_{p-1}(s) ds$. As it was shown in \cite{bib:gjy}, the squared Bessel process with negative dimension starting from $0$ is just -BESQ$^{|\alpha-p+1|}(0)$ and consequently it becomes strictly negative just after the start. Thus, if the time change $A_t>0$ (with positive probability) then $\pr(e_p(X_t)<0)=1$ for every $t>0$. Thus $A_t\equiv 0$ and consequently the process $e_{p-1}(t)$ is always zero. Looking at the SDE \eqref{eq:polynom_first:SDEs} for $e_{p-1}$ we deduce from $e_{p-1}(t)\equiv 0$ that the drift term must vanish, which means that $e_{p-2}(t)\equiv 0$. Note that we use here the fact that $\alpha\not\in B$ implies that the factors $\alpha-n+1$ in the drift term are non-zero. Consequently, by induction, we arrive at $e_1\equiv 0$, which is impossible because the drift term of the process $e_1$ is $pdt\not=0$. The classical Wallach set corresponds to $x_0=0$. This proves (i).

The proof of (ii) is the same, however the condition $\alpha\in B$ implies that the factors $\alpha-n+1$ in the drift term are non-zero until $n=\alpha+1$. By induction, we get $e_{\alpha+1}(0)=0$, which is equivalent to $rk(x_0) \le \alpha$.

\end{proof}

Observe that the method of polynomials does not apply to the case $rk(x_0)=p$. We will show below that in this case,  the  eigenvalue process $\lambda_1(t)$ of any matrix solution of \eqref{eq:Wishart:SDe} becomes negative.


\subsection{Squared Bessel particle systems}

We call  {\it squared Bessel particle system} the  eigenvalue process $\lambda_1(t)\le \cdots \le \lambda_p(t)$ of a matrix solution of the SDE \eqref{eq:Wishart:SDe}. The following corollary of the results of \cite{bib:gm2} is needed in the proof of the characterization of $W$. 

\begin{proposition}\label{bad_p}
   For  $\alpha \in B$ and any $x_0 \in {\mathcal S}^+_p$ (i.e. $rk(x_0)=p$) the eigenvalue process $\lambda_1(t)\le \cdots \le \lambda_p(t)$ is a strong and pathwise unique solution of the following SDE system 
\begin{eqnarray}
\label{eq:eigenvalues:SDe}
  d \lambda_i = 2\sqrt{|\lambda_i|}dB_i + \left(\alpha+\sum_{k\neq i}\frac{|\lambda_i|+|\lambda_k|}{\lambda_i-\lambda_k}\right)dt\/,\quad i=1,\ldots,p
\end{eqnarray}
Moreover, the eigenvalues $\lambda_i(t)$ never collide if $t>0$. 
\end{proposition}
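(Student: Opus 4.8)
The plan is to derive the eigenvalue system \eqref{eq:eigenvalues:SDe} from the matrix SDE \eqref{eq:Wishart:SDe} by It\^o's formula, and then to read off strong existence, pathwise uniqueness and non-collision for \eqref{eq:eigenvalues:SDe} from the theory of non-colliding particle systems of \cite{bib:gm2}. First, let $X_t$ be a matrix solution of \eqref{eq:Wishart:SDe} and write $X_t=U_t\,\mathrm{diag}(\lambda_1(t),\ldots,\lambda_p(t))\,U_t^T$ with $U_t\in SO(p)$. On any time interval on which the spectrum of $X_t$ stays simple the spectral map is smooth, and the stochastic eigenvalue-perturbation formula (Theorem 3 of \cite{bib:gm11}, used exactly as in the proof of Proposition \ref{prop:Poly}) together with the computation of the martingale parts and brackets in Propositions 3.1 and 3.2 of \cite{bib:gm2}, specialised to the square-root diffusion field of \eqref{eq:Wishart:SDe}, yields precisely \eqref{eq:eigenvalues:SDe}: the diagonal of the conjugated noise $U_t^T dW_t U_t$ produces independent one-dimensional Brownian motions $B_i$ and the martingale term $2\sqrt{|\lambda_i|}\,dB_i$; the constant drift $\alpha\,dt$ comes from $\alpha I\,dt$; and the singular drift $\sum_{k\neq i}\frac{|\lambda_i|+|\lambda_k|}{\lambda_i-\lambda_k}\,dt$ is the second-order It\^o correction of the spectral map applied to $\sqrt{|X_t|}\,dW_t+dW_t^T\sqrt{|X_t|}$, using that the off-diagonal entry $(U_t^TdX_tU_t)_{ik}$ has bracket density $|\lambda_i|+|\lambda_k|$. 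Exactly as in the proof of Proposition \ref{prop:Poly} the form of the resulting system does not depend on $x_0$, so I may first assume that $x_0$ has simple spectrum.

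Next I would invoke \cite{bib:gm2} for \eqref{eq:eigenvalues:SDe} itself. It is of the type treated there: the diffusion coefficients $\lambda\mapsto 2\sqrt{|\lambda|}$ are $\tfrac12$-H\"older, and the pair interaction is the repulsive kernel $(x,y)\mapsto(|x|+|y|)/(x-y)$. What I need is that, for any ordered initial configuration, \eqref{eq:eigenvalues:SDe} has a pathwise unique strong solution which for every $t>0$ lies in the open chamber $\{\lambda_1<\cdots<\lambda_p\}$; equivalently, started from a simple spectrum the first collision time is almost surely $+\infty$, and a degenerate initial configuration separates instantaneously. Heuristically this holds because near a meeting of two neighbouring eigenvalues the gap $Y=\lambda_{i+1}-\lambda_i$ satisfies $d\langle Y\rangle=4(|\lambda_i|+|\lambda_{i+1}|)\,dt$ with singular drift $2(|\lambda_i|+|\lambda_{i+1}|)/Y$, so that after a strictly increasing random time change $Y^2$ behaves like a squared Bessel process of dimension essentially $2$, which cannot reach $0$ at a positive time; rigorously---and including the cases of simultaneous collisions of several particles and of collisions at the level $0$---this is proved in \cite{bib:gm2} by controlling the semimartingale $\sum_{i<j}\log(\lambda_j-\lambda_i)$ (the logarithm of the Vandermonde determinant) and showing, via the algebraic identities satisfied by the kernel $(|x|+|y|)/(x-y)$, that it does not explode to $-\infty$ in finite time. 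The restriction $\alpha\in B$ plays no role here; it is recorded because it is the case relevant to the characterization of $W$.

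Finally I would glue the two together. Assuming first that $x_0$ has simple spectrum, let $\tau>0$ be the first collision time of the eigenvalue process of $X_t$. By the first step this process solves \eqref{eq:eigenvalues:SDe} on $[0,\tau)$ with the same initial value as the strong solution $\Lambda_t$ furnished by the second step, so by pathwise uniqueness it coincides with $\Lambda_t$ on $[0,\tau)$; since $\Lambda_t$ never collides, $\tau=+\infty$ almost surely and the two processes agree on $[0,\infty)$. The case of a degenerate $x_0$ reduces to this one by the instantaneous-separation property established in the second step, or by approximating $x_0$ by matrices with simple spectrum and passing to the limit. In all cases the eigenvalue process of a matrix solution of \eqref{eq:Wishart:SDe} is the strong, pathwise unique solution of \eqref{eq:eigenvalues:SDe}, and its components never collide for $t>0$. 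The single substantial point is the non-collision, namely verifying that \eqref{eq:eigenvalues:SDe} satisfies the hypotheses of \cite{bib:gm2} and that its interaction is repulsive enough to forbid collisions; granting that, the remaining steps are routine applications of It\^o's formula and of pathwise uniqueness.
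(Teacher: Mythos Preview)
Your proposal is correct and follows essentially the same strategy as the paper: derive \eqref{eq:eigenvalues:SDe} from the matrix SDE via Theorem~3 of \cite{bib:gm11} on the set where the spectrum is simple, and then import non-collision, strong existence and pathwise uniqueness for \eqref{eq:eigenvalues:SDe} from \cite{bib:gm2}, gluing via pathwise uniqueness. The one organizational difference worth noting is that the paper routes explicitly through the symmetric-polynomial system of Proposition~\ref{prop:Poly}: since the $e_n$ are polynomial functions of the entries of $X_t$, their SDEs are globally defined with no singularity at eigenvalue collisions, and the paper invokes Prop.~4.3 and Th.~4.4 of \cite{bib:gm2} at the \emph{polynomial} level to obtain instantaneous non-collision, then Remark~5.2 and Th.~5.3 of \cite{bib:gm2} to pass back to the $\lambda_i$. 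Your treatment of non-simple initial spectra by ``instantaneous separation or approximation'' is exactly the step this polynomial detour is designed to make rigorous, so if you want to tighten the argument you should point to the $e_n$-system rather than to a limit of simple-spectrum initial data.
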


\begin{proof}

We use a natural bijection between  the polynomials $e=(e_1,\ldots,e_p)$  and the   eigenvalues  $(\lambda_1\ldots  \lambda_p)$  belonging to the closed Weyl chamber $\bar C_+= \{(x_1,\ldots,x_p)\in\R^p: x_1\le x_2<\ldots\le x_p\}$, see  \cite{bib:gm2}.

 The first part of the proof of Prop. 4.3 in \cite{bib:gm2} together with the proof of Th. 4.4 in \cite{bib:gm2} imply that 
 any solution  of the system \eqref{eq:polynom_first:SDEs} and \eqref{eq:polynom_last:SDEs} becomes non-colliding for every $t>0$, i.e. $\lambda_i(e(t))$ are all different. Then we apply Remark 5.2\cite{bib:gm2}, which allows to construct a non-colliding  solution to the system \eqref{eq:eigenvalues:SDe}. By Th. 5.3 in \cite{bib:gm2} we get pathwise uniqueness for the solutions of \eqref{eq:eigenvalues:SDe}. From the other side, it was shown in Theorem 3 in \cite{bib:gm11} that whenever the eigenvalues are initially different then \eqref{eq:eigenvalues:SDe} holds. To deal with starting from collision points we first write SDEs for symmetric polynomials (what can be done in every case).  As it was done in \cite{bib:gm2}, using It\^o formula argument (the form of the SDEs does not depend on the starting point), we claim that the eigenvalues are solutions to \eqref{eq:eigenvalues:SDe}.

\end{proof}

\begin{proposition}
\label{prop:hit:zero}
Suppose $\alpha<p-1$ and let $(\lambda_1,\ldots,\lambda_p)$ be a non-colliding solution of the system  \eqref{eq:eigenvalues:SDe} with $\lambda_1(0)\ge 0$. Then $\pr(\lambda_1(t)<0)>0$, for every $t>0$.
\end{proposition}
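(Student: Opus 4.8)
The plan is to dominate the smallest eigenvalue $\lambda_1$ from above by a squared Bessel process of negative dimension $\delta_0:=\alpha-p+1$, which is negative since $\alpha<p-1$, and then invoke the results of \cite{bib:gjy} on such processes.

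First I would obtain a pathwise, sign-free bound on the drift coefficient of $\lambda_1$ in \eqref{eq:eigenvalues:SDe}. Since $\lambda_1(t)\le\lambda_k(t)$ for all $k$ and, by non-collision, $\lambda_1(t)<\lambda_k(t)$ for every $t>0$ and $k\ne 1$, the triangle inequality gives
\[
|\lambda_1(t)|+|\lambda_k(t)|\ \ge\ |\lambda_1(t)-\lambda_k(t)|\ =\ \lambda_k(t)-\lambda_1(t),
\]
so that $\bigl(|\lambda_1(t)|+|\lambda_k(t)|\bigr)/\bigl(\lambda_1(t)-\lambda_k(t)\bigr)\le -1$ whatever the signs of the eigenvalues are. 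Summing over $k\ne 1$, the drift of $\lambda_1$ is at most $\alpha-(p-1)=\delta_0<0$ for every $t>0$.

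Next I would run, on the same probability space and driven by the same Brownian motion $B_1$ as $\lambda_1$, the one-dimensional diffusion
\[
d\tilde\lambda_t=2\sqrt{|\tilde\lambda_t|}\,dB_1+\delta_0\,dt,\qquad \tilde\lambda_0=\lambda_1(0)\ge 0,
\]
i.e. the squared Bessel process of dimension $\delta_0$ started at $\lambda_1(0)$; pathwise uniqueness holds because $x\mapsto 2\sqrt{|x|}$ is $\tfrac12$-Hölder (Yamada--Watanabe). Both $\lambda_1$ and $\tilde\lambda$ have this same diffusion coefficient, and the drift of $\lambda_1$ is everywhere dominated by $\delta_0$, so the comparison theorem for one-dimensional stochastic equations yields $\lambda_1(t)\le\tilde\lambda_t$ almost surely for all $t\ge 0$. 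Consequently $\{\tilde\lambda_t<0\}\subseteq\{\lambda_1(t)<0\}$, and it remains only to check that $\pr(\tilde\lambda_t<0)>0$ for every $t>0$. This is where \cite{bib:gjy} enters: if $\lambda_1(0)=0$ then $\tilde\lambda=-\,\mathrm{BESQ}^{|\delta_0|}(0)$, whose time-$t$ law has no atom at $0$, so $\pr(\tilde\lambda_t<0)=1$; if $\lambda_1(0)>0$ then $\tilde\lambda$ reaches $0$ in finite time and $\pr(T_0<t)>0$ for its first hitting time $T_0$ of $0$, and applying the strong Markov property at $T_0$ reduces the event $\{T_0<t\}$ to the previous case, giving $\pr(\tilde\lambda_t<0)\ge\pr(T_0<t)>0$. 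In either case $\pr(\lambda_1(t)<0)>0$, which is the claim.

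The delicate point is the comparison step: the quantity dominating $\lambda_1$ from the drift side is the genuinely path-dependent process $\alpha+\sum_{k\ne 1}(|\lambda_1|+|\lambda_k|)/(\lambda_1-\lambda_k)$, which is only known to be bounded above by $\delta_0$ and may tend to $-\infty$ near a collision of the eigenvalues at $t=0$; one must therefore use the version of the one-dimensional comparison theorem in which the drift of the dominated process is an arbitrary adapted process (not a function of the solution) bounded by the drift of the comparison diffusion---this is still covered by the Yamada-type argument precisely because the two equations share the $\tfrac12$-Hölder diffusion coefficient $2\sqrt{|\cdot|}$. The drift estimate and the negative-dimensional BESQ facts are then routine.
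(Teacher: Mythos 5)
Your proof is correct and follows essentially the same route as the paper: you compare $\lambda_1$ with the same dominating diffusion $\tilde\lambda$ solving $d\tilde\lambda=2\sqrt{|\tilde\lambda|}\,dB_1+(\alpha-p+1)\,dt$ with the \emph{same} Brownian motion $B_1$, use the same elementary inequality $|\lambda_1|+|\lambda_k|\ge\lambda_k-\lambda_1$ to bound the drift, deduce $\lambda_1\le\tilde\lambda$ by a one-dimensional comparison, and finish with the negative-dimension BESQ facts from G\"oing-Jaeschke--Yor. The ``delicate point'' you flag --- that the dominated drift is an adapted, path-dependent quantity rather than a function of $\lambda_1$ alone --- is exactly the reason the paper invokes Le~Gall's local-time argument (showing $L^0(\tilde\lambda-\lambda_1)=0$, then applying Tanaka's formula), and that is precisely the version of the comparison theorem you allude to; so the mechanism is identical, only the citation is phrased differently. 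One small point where you are more explicit than the paper: the paper concludes from \cite{bib:gjy} that $\tilde\lambda$ crosses zero a.s.\ and then stays negative, which directly gives the conclusion used downstream, whereas you additionally split into the cases $\lambda_1(0)=0$ and $\lambda_1(0)>0$ and use the strong Markov property to get $\pr(\tilde\lambda_t<0)>0$ for \emph{every} $t>0$, matching the literal statement of the proposition more carefully.
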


\begin{proof}
Let $\Lambda = (\lambda_1,\ldots,\lambda_p)$ be a non-colliding solution to \eqref{eq:eigenvalues:SDe} and let $\tilde{\lambda_1}$ be a solution to the SDE given by
\begin{eqnarray*}
 d\tilde{\lambda}_1  = 2\sqrt{|\tilde{\lambda}_1|}dB_1+(\alpha-(p-1))dt\/.
\end{eqnarray*}
starting from $\tilde{\lambda}_1(0)=\lambda_1(0)\geq 0$. Note that $B_1$ is the same Brownian motion that appears in the SDE for $\lambda_1$. Now, we apply the techniques of local times proposed by Le Gall in \cite{bib:LeGall1983}, described also in \cite{bib:ry99}. More precisely, by Lemmas 3.3 and 3.4 in \cite{bib:ry99}, the local time $L^0(\tilde{\lambda}_1-\lambda_1)$ is zero. Using Tanaka's formula (see proof of Thm 3.7 in \cite{bib:ry99}) we obtain
\begin{eqnarray*}
  \ex(\lambda_1(t)-\tilde{\lambda}_1(t))^{+} = \ex\int_0^t \mathbf{1}_{\{\lambda_1(s)>\tilde{\lambda}_1(s)\}}\left(p-1+\sum_{k=2}^p\frac{|\lambda_1(s)|+|\lambda_k(s)|}{\lambda_1(s)-\lambda_k(s)}\right)ds\leq 0\/.
\end{eqnarray*}
The last inequality follows form the estimate ${|x|+|y|}\geq x-y$ valid for every $y<x$. It implies that
$
\pr(\lambda_1(t)\leq \tilde{\lambda}_1(t) \textrm{ for every $t\geq 0$})=1\/.
$
Since the process $\tilde{\lambda}_1$ is a squared Bessel motion with dimension $\alpha-(p-1)< 0$,  it crosses zero a.s. and next it remains in the negative half-line, see \cite{bib:gjy}. This ends the proof.
\end{proof}

\begin{proposition}\label{rank_p}
  Suppose that  $0<\alpha<p-1$ and   $rk(x_0)=p$. Then $(x_0,\alpha/2)\not\in W$.
\end{proposition}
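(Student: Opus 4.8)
The plan is to reduce the statement to Proposition~\ref{prop:hit:zero} via Proposition~\ref{bad_p}. Suppose for contradiction that $(x_0,\alpha/2)\in W$ with $rk(x_0)=p$ and $0<\alpha<p-1$. Since $\alpha<p-1$, the Mayerhofer-type conditions we want to contradict would in particular force $\alpha\in B$ if a solution in the cone existed; but more directly: by Proposition~1 (Mayerhofer), $(x_0,\alpha/2)\in W$ gives a matrix solution $X_t$ of \eqref{eq:Wishart:SDe} with $X_t\in\bar{\mathcal S}_p^+$ for all $t\ge 0$ and $X_0=x_0$, so in particular $\lambda_1(t)\ge 0$ for all $t\ge 0$, where $\lambda_1(t)\le\cdots\le\lambda_p(t)$ are the eigenvalues of $X_t$.

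First I would split into two cases according to whether $\alpha\in B$ or not. If $\alpha\not\in B$, then Proposition~\ref{prop:polyn}(i) applies once we know $rk(x_0)<p$; but here $rk(x_0)=p$, so that route is unavailable and we must argue with eigenvalues directly. The cleaner approach is to treat both cases at once via the particle system: since $rk(x_0)=p$ we have $x_0\in\mathcal S_p^+$, and I would like to invoke Proposition~\ref{bad_p} to conclude that the eigenvalue process $(\lambda_1(t),\ldots,\lambda_p(t))$ is a non-colliding solution of \eqref{eq:eigenvalues:SDe}. Then Proposition~\ref{prop:hit:zero}, with $\lambda_1(0)\ge 0$ (indeed $\lambda_1(0)>0$ since $rk(x_0)=p$), yields $\pr(\lambda_1(t)<0)>0$ for every $t>0$, contradicting $X_t\in\bar{\mathcal S}_p^+$ a.s. Hence $(x_0,\alpha/2)\not\in W$.

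The main obstacle is that Proposition~\ref{bad_p} as stated requires $\alpha\in B$, whereas here $0<\alpha<p-1$ need not be an integer. So the argument must be arranged so that the representation of the eigenvalues as a solution of \eqref{eq:eigenvalues:SDe} is available for all $\alpha$ in the range, not just integer ones. I would handle this by observing that the derivation of \eqref{eq:eigenvalues:SDe} from the matrix SDE \eqref{eq:Wishart:SDe} — via the symmetric polynomial SDEs of Proposition~\ref{prop:Poly} and Theorem~3 of \cite{bib:gm11} together with the It\^o-formula/starting-point-independence argument in the proof of Proposition~\ref{bad_p} — never actually uses integrality of $\alpha$; the hypothesis $\alpha\in B$ in Proposition~\ref{bad_p} is only needed there for the non-colliding and pathwise-uniqueness statements, while for our purposes we only need that the eigenvalues of a given matrix solution $X_t\in\mathcal S_p^+$ satisfy \eqref{eq:eigenvalues:SDe} and are non-colliding for $t>0$ (the latter following from the first part of the proof of Prop.~4.3 in \cite{bib:gm2}, which again does not require $\alpha\in B$).

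Alternatively, and perhaps more transparently, one can avoid \eqref{eq:eigenvalues:SDe} entirely and run the comparison directly on the matrix level: writing $\lambda_1(t)=\min_{\norm v=1}\langle X_t v,v\rangle$ and applying It\^o's formula to $\langle X_t v_t,v_t\rangle$ for the (a.s.\ unique, since eigenvalues do not collide for $t>0$) unit eigenvector $v_t$ associated to $\lambda_1(t)$, one obtains a one-dimensional SDE for $\lambda_1$ whose drift is bounded above by $\alpha-(p-1)$ using the estimate $|x|+|y|\ge x-y$ exactly as in the proof of Proposition~\ref{prop:hit:zero}; comparison with the negative-dimensional squared Bessel process $\tilde\lambda_1$ then forces $\lambda_1$ below zero with positive probability. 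Either way, the conclusion is that no matrix solution can stay in $\bar{\mathcal S}_p^+$, so $(x_0,\alpha/2)\not\in W$. I expect the write-up to be short, with the only delicate point being the careful citation of which results in \cite{bib:gm2} and \cite{bib:gm11} are valid without the integrality assumption.
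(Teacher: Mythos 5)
Your proof is essentially the paper's proof: both reduce to showing that the eigenvalue process of a matrix solution in $\bar{\mathcal{S}}_p^+$ solves \eqref{eq:eigenvalues:SDe} with non-colliding paths for $t>0$, and then invoke Proposition~\ref{prop:hit:zero} to drive $\lambda_1$ strictly negative with positive probability, contradicting membership in the cone. You also correctly identify the one delicate point, namely that Proposition~\ref{bad_p} is stated only for $\alpha\in B$, so something else is needed to cover non-integer $\alpha$ in $(0,p-1)$. Where you differ from the paper is in how you plug this gap: you argue (somewhat informally) that the derivation in the proof of Proposition~\ref{bad_p} never really uses integrality of $\alpha$, whereas the paper simply cites Corollary~6.6 of \cite{bib:gm2} for the case $\alpha\in\R^+\setminus B$, keeping Proposition~\ref{bad_p} only for $\alpha\in B$. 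Your route would require actually verifying that the cited ingredients from \cite{bib:gm2} and \cite{bib:gm11} hold for non-integer $\alpha$; citing Cor.~6.6 is cleaner and avoids this audit. Your second, alternative route --- a direct It\^o/Tanaka comparison for $\lambda_1(t)=\min_{\|v\|=1}\langle X_tv,v\rangle$ at the matrix level, with the same drift bound $|x|+|y|\ge x-y$ --- is genuinely different and also sound in spirit, though it would still need the non-collision of eigenvalues for $t>0$ to make the eigenvector $v_t$ smooth enough for the It\^o computation, so it does not fully sidestep the \cite{bib:gm2} machinery.
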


\begin{proof}

Consider a solution  of \eqref{eq:Wishart:SDe}. The corresponding eigenvalue process $\Lambda=(\lambda_1,\ldots,\lambda_p)$ is  a strong and pathwise unique solution of \eqref{eq:eigenvalues:SDe}. If  $\alpha\in \R^+\setminus B$, it is justified  by  Cor.6.6\cite{bib:gm2}
(note a  misprint in the formulation of  Cor.6.6\cite{bib:gm2}: it should be $\alpha\in \R^+\setminus B$). In the case $\alpha\in B$, $rk(x_0)=p$, it follows from Proposition  \ref{bad_p}. Finally, by Proposition \ref{prop:hit:zero},  the first eigenvalue $\lambda_1(t)$ becomes strictly negative, so $(x_0,\alpha/2)\not\in W$. 

\end{proof}

By Propositions \ref{prop:polyn} and \ref{rank_p} we claim that

\begin{theorem}
The Mayerhofer Conjecture is true.
\end{theorem}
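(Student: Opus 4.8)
The plan is to collect the facts already assembled above. First I would dispose of \emph{sufficiency} by citation: a random matrix with the Laplace transform \eqref{eq:Lap1} was constructed by Bru \cite{bib:b91} both when $2\beta\in B$ with $rk(x_0)\le 2\beta$ and when $\beta>\frac{p-1}{2}$, and the remaining boundary case $2\beta=p-1$ is covered in \cite{bib:LetMassFalse} and \cite{bib:gm11}. Hence only \emph{necessity} needs argument: if $(x_0,\beta)\in W$ with $x_0\in\bar{\mathcal{S}}_p^+$, then either $\beta\ge\frac{p-1}{2}$, or $2\beta\in B$ and $rk(x_0)\le 2\beta$.

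For necessity I would start from the observation, recalled in the introduction, that $(x_0,\beta)\in W$ forces $\beta\ge 0$, and set $\alpha=2\beta\ge 0$. By Mayerhofer's Proposition recalled above, $(x_0,\alpha/2)\in W$ is equivalent to the existence of a solution $X_t\in\bar{\mathcal{S}}_p^+$ of the matrix SDE \eqref{eq:Wishart:SDe} with this $\alpha$ and $X_0=x_0$, so I may argue with such a solution. If $\alpha\ge p-1$, i.e. $\beta\ge\frac{p-1}{2}$, we are in the first alternative and there is nothing more to prove; so I would assume from now on that $0\le\alpha<p-1$ (which forces $p\ge2$) and show that then necessarily $\alpha\in B$ and $rk(x_0)\le\alpha$.

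The argument then splits on the rank of $x_0$. If $rk(x_0)=p$, I would invoke Proposition \ref{rank_p} to reach a contradiction when $0<\alpha<p-1$; the leftover corner $\alpha=0$, $rk(x_0)=p$ I would kill directly with the determinant argument underlying Proposition \ref{prop:polyn}, noting that by \eqref{eq:polynom_last:SDEs} the determinant $e_p(t)$ would be a time-changed squared Bessel process of negative dimension $\alpha-p+1<0$ started from $e_p(0)=\det x_0>0$, and so would become strictly negative. Either way $(x_0,\beta)\notin W$, a contradiction, so $rk(x_0)<p$. With $rk(x_0)<p$ and $0\le\alpha<p-1$ (hence $p\ge2$), if $\alpha\notin B$ then automatically $\alpha>0$ (since $0\in B$), so the hypotheses of Proposition \ref{prop:polyn}(i) are met and it gives $(x_0,\beta)\notin W$, a contradiction; therefore $\alpha\in B$. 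Finally Proposition \ref{prop:polyn}(ii), applied with $\alpha\in B$ and $rk(x_0)<p$, yields $rk(x_0)\le\alpha=2\beta$. This is exactly the second alternative of the conjecture, and specializing to $x_0=0$ recovers the classical identity $W_0=\frac12 B\cup[\frac{p-1}{2},\infty)$, already contained in Proposition \ref{prop:polyn}(i).

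Since all the analytic and probabilistic substance sits in Propositions \ref{prop:polyn} and \ref{rank_p}, there is no serious obstacle remaining at this level; the only point demanding a little care is the bookkeeping of the case analysis — checking that every pair $(\alpha,rk(x_0))$ with $0\le\alpha<p-1$ is accounted for, in particular the degenerate corner $\alpha=0$, $rk(x_0)=p$, where Proposition \ref{prop:polyn}'s determinant argument has to be run with $e_p(0)=\det x_0>0$ in place of $e_p(0)=0$.
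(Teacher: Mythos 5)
Your overall strategy is the paper's: cite Bru, Letac--Massam and \cite{bib:gm11} for sufficiency, then combine Propositions \ref{prop:polyn} and \ref{rank_p} for necessity. The bookkeeping is accurate, and you correctly noticed that the paper's two propositions, read literally, leave the corner $\alpha=0$, $rk(x_0)=p$ uncovered (Proposition \ref{rank_p} is stated only for $0<\alpha<p-1$). That is a genuine observation and worth addressing.

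However, your patch for that corner is not airtight. You write that $e_p(t)$, being a time-changed $\mathrm{BESQ}^{\alpha-p+1}$ started from $\det x_0>0$, ``would become strictly negative.'' The underlying one-dimensional process $Y$ does indeed hit $0$ in finite time $\tau_0$ and then turn negative, but $e_p(t)=Y(A_t)$ with $A_t=\int_0^t e_{p-1}(s)\,ds$, and you have not shown that $A_t$ ever exceeds $\tau_0$. This is precisely the step that is vacuous in the paper's setting: there $e_p(0)=0$, so $Y$ is strictly negative on $(0,\infty)$ and any $A_t>0$ already forces $e_p(t)<0$. When $e_p(0)>0$ you need $A_\infty>\tau_0$, and nothing rules out a priori that $A_\infty\le\tau_0<\infty$, i.e.\ that $e_{p-1}$ is integrable on $[0,\infty)$. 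So the determinant route does not close the corner without further argument.

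The cheaper and safe fix is the eigenvalue route already present in the paper: $0\in B$, so Proposition \ref{bad_p} applies with $\alpha=0$ and $rk(x_0)=p$ and shows that the eigenvalues solve \eqref{eq:eigenvalues:SDe}; Proposition \ref{prop:hit:zero} only needs $\alpha<p-1$, hence gives $\pr(\lambda_1(t)<0)>0$. In other words, the restriction $\alpha>0$ in the statement of Proposition \ref{rank_p} is not used in its proof, and the proposition can simply be read (or restated) for $0\le\alpha<p-1$, after which your case analysis is complete and no determinant argument at a positive starting point is needed.
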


As a consequence, we obtain the following

\begin{corollary}
 Let $x_0\in\bar {\mathcal  S}_p^+$.  The SDE \eqref{eq:Wishart:SDe} has a solution $X_t\in   \bar {\mathcal  S}_p^+$ if and only if 
 $$
 (\alpha\in  [{p-1},\infty), x_0\in \bar  {\mathcal   S}_p^+)  \ {\rm or}\ (\alpha \in B, rk(x_0)\le \alpha).
 $$
\end{corollary}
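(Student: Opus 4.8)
The plan is to combine the equivalence from the Mayerhofer Proposition with the Mayerhofer Conjecture (just established as a theorem) to translate the statement about solutions of \eqref{eq:Wishart:SDe} into the known description of the non-central Wallach set $W$. First I would recall that, by the Mayerhofer Proposition, the SDE \eqref{eq:Wishart:SDe} with $x_0\in\bar{\mathcal S}_p^+$ has a solution in $\bar{\mathcal S}_p^+$ if and only if $(x_0,\frac{\alpha}{2})\in W$. Then I would invoke the Theorem asserting the Mayerhofer Conjecture, which says precisely that $(x_0,\frac{\alpha}{2})\in W$ holds if and only if either $\frac{\alpha}{2}\in[\frac{p-1}{2},\infty)$ and $x_0\in\bar{\mathcal S}_p^+$, or $2\cdot\frac{\alpha}{2}=\alpha\in B$ and $rk(x_0)\le 2\cdot\frac{\alpha}{2}=\alpha$.

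The only remaining work is the cosmetic translation $\beta=\alpha/2$ in each branch of the condition. For the continuous branch, $\beta=\alpha/2\ge\frac{p-1}{2}$ is equivalent to $\alpha\ge p-1$, i.e. $\alpha\in[p-1,\infty)$. For the discrete branch, the defining condition $2\beta\in B$ reads $\alpha\in B$ verbatim (since $2\beta=\alpha$), and the rank bound $rk(x_0)\le 2\beta$ becomes $rk(x_0)\le\alpha$. Assembling these two cases yields exactly the stated disjunction
\[
(\alpha\in[p-1,\infty),\ x_0\in\bar{\mathcal S}_p^+)\quad\text{or}\quad(\alpha\in B,\ rk(x_0)\le\alpha),
\]
which completes the proof.

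There is essentially no obstacle here: the corollary is a direct substitution into two results already in hand, and all the genuine content — the necessity via the symmetric-polynomial argument of Proposition \ref{prop:polyn} and the eigenvalue argument of Propositions \ref{prop:hit:zero} and \ref{rank_p}, together with the sufficiency results of Bru and of Letac–Massam / Graczyk–Ma\l{}ecki cited in the introduction — has already been absorbed into the Theorem. If anything, the single point worth a half-sentence of care is that the boundary case $\alpha=p-1$ (equivalently $2\beta=p-1$) sits in the first branch rather than the second, consistent with the convention $B=\{0,1,\dots,p-2\}$; this is inherited without change from the statement of the Mayerhofer Conjecture.
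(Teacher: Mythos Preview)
Your proposal is correct and matches the paper's approach exactly: the paper presents the corollary with the phrase ``As a consequence, we obtain the following'' immediately after the Theorem, i.e.\ it is obtained by combining the Mayerhofer Proposition (equivalence of $\bar{\mathcal S}_p^+$-valued solutions with $(x_0,\alpha/2)\in W$) with the now-proved characterization of $W$, followed by the substitution $\beta=\alpha/2$. There is nothing to add.
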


{\bf Remark}.  In Proposition  \ref{prop:polyn}(i) we gave a stochastic proof of the characterization of the classical "central" Wallach set.  Observe that  one more stochastic proof of the central Wallach set may be given by comparison methods. Indeed,  
 it follows from   Proposition \ref{prop:hit:zero} that, if $\alpha<p-1$, $\alpha\not\in B$ and $x_0=0$, then the first eigenvalue $\lambda_1(t)$ becomes strictly negative, so $\alpha/2\not\in W_0$.





\end{document}